\providecommand{\CC}{{\mathbb{C}}}
\providecommand{\RR}{{\mathbb{R}}}
\providecommand{\ZZ}{{\mathbb{Z}}}
\newtheorem{theorem}{Theorem}[subsection]
\newtheorem{lemma}[theorem]{Lemma}
\newtheorem{proposition}[theorem]{Proposition}
\theoremstyle{definition}
\theoremstyle{remark} 
\newtheorem{remark}[theorem]{Remark}
\numberwithin{equation}{section}
\begin{document}

%\title{Fredholm Operators and $K$-homology: hypoelliptic operators}
\title{$K$-homology and Fredholm Operators II: Elliptic Operators}
\author{Paul F.\ Baum}
\address{The Pennsylvania State University, University Park, PA, 16802, USA}
\email{baum@math.psu.edu}
\author{Erik van Erp}
\address{Dartmouth College, 6188, Kemeny Hall, Hanover, New Hampshire, 03755, USA}\email{jhamvanerp@gmail.com}

\thanks{PFB was partially supported by NSF grant DMS-0701184}
\thanks{EvE was partially supported by NSF grant DMS-1100570}

\maketitle

\tableofcontents

\section{Introduction}

In this paper we prove commutativity of the triangle
\[ \xymatrix{   & K^0(T^*X) \ar[dr]^{\rm Op}\ar[dl]_c& \\
 K_0^{top}(X) \ar[rr]_\mu & & KK^0(C(X),\CC)              
 }
\]
for a closed smooth manifold $X$. This triangle was first introduced by Paul Baum and Ron Douglas in \cite{BD80} to formulate the Atiyah-Singer index theorem for elliptic operators in the framework of $K$-homology. 
An elliptic operator on a closed manifold $X$ determines an element in the {\em analytic} $K$-homology group $KK^0(C(X), \CC)$, and the solution of the index problem amounts to finding a {\em topological} description of this element.
As proposed in \cite{BD80}, a convenient way to formalize this problem is to ask for the construction of an explicit $K$-cycle in the {\em geometric} $K$-homology group $K_0^{top}(X)$ that (under the natural isomorphism $\mu$ between analytic and geometric $K$-homology) corresponds to the given elliptic operator.

Our renewed interest in the above triangle stems from its essential role in our solution of the index problem for the Heisenberg calculus on contact manifolds \cite{BvE3}.
In \cite{BD80} Baum and Douglas noted that commutativity of the triangle is implied by the Atiyah-Singer index theorem for {\em families} of elliptic operators.
In this paper we give a direct proof that does not rely on the  families index theorem.
Such a direct proof is needed in order to extend the results of \cite{BvE3} to the families case and the equivariant case.

Commutativity of the triangle is tantamount to the assertion that the index problem for elliptic operators reduces to the index problem for Dirac operators.
Hence, we provide a simple direct proof of reduction to the Dirac case.

In section \ref{triangle}, the groups and maps of the triangle are defined.
For the proof of commutativity, preliminaries are given in sections \ref{family} and \ref{rotation}.
The proof is given in section \ref{reduction}.

\section{The triangle}\label{triangle}

\subsection{$K$-theory (with compact supports)}

Throughout this paper $K$-theory, denoted $K^\bullet(Y)$ for a locally compact space $Y$, is Atiyah-Hirzebruch $K$-theory,
i.e. topological $K$-theory with compact supports.
Any element in $K^0(Y)$ is given by a triple $(\sigma, E, F)$ where $E$, $F$ are $\CC$ vector bundles on $Y$,
and $\sigma:E\to F$ is a vector bundle map which is an isomorphism outside a compact subset of  $Y$.

In particular, if $X$ is a closed smooth manifold, and $P:C^\infty(E^0)\to C^\infty(E^1)$ is an elliptic (pseudo)differential operator, then  the principal symbol $\sigma:\pi^*E^0\to \pi^*E^1$  determines an element in $K^0(T^*X)$.
Here $\pi:T^*X\to X$ is the projection.

\subsection{Geometric $K$-homology}

Geometric $K$-cycles were introduced in \citelist{\cite{BD80} \cite{BD82}}.
For a detailed description of the relation between geometric $K$-homology and analytic $K$-homology, see \cite{BHS10}.
In this section we briefly recall the definition and main features of $K$-cycle $K$-homology.
The theory is defined for the category of paracompact Hausdorff topological spaces. In particular, since any $CW$ complex is
paracompact Hausdorff, $K$-cycle $K$-homology is defined for any $CW$ complex. 
%Throughout this section $X$ will be a normal topological space (to make excision work).
%When we have a pair $(X,Y)$ we might want some condition on $Y$, e.g., $Y$ must have arbitrarily small open neighborhoods that contract to $Y$ (to get full excision).

\vskip 6pt

Given a paracompact Hausdorff topological space $X$, a $K$-cycle for $X$ is a triple $(M,E,\varphi)$
consisting of a compact (without boundary) Spin$^c$ manifold $M$,
a smooth $\CC$ vector bundle $E$ on $M$,
and a continuous map $\varphi\,\colon M\to X$.
The collection of all such $K$-cycles, subject to a certain equivalence relation, forms an abelian group under disjoint union.
We denote this group by $K^{top}_\ast(X)$.

The equivalence relation that is imposed on the $K$-cycles is generated by three elementary steps: 
\begin{itemize}
\item bordism 
\item direct sum-dijoint union
\item vector bundle modification
\end{itemize}

The abelian group $K^{top}_*(X)$ consists of the equivalence classes of $K$-cycles.
Addition is given by disjoint union,
\[ (M_0,E_0,\varphi_0) + (M_0,E_0,\varphi_0) = (M_0\sqcup M_1,E_0\sqcup E_1,\varphi_0\sqcup \varphi_1)\]
and the additive inverse of a $K$-cycle is obtained by reversing the Spin$^c$ structure
\[ -(M,E,\varphi) = (-M,E,\varphi).\] 
The group $K^{top}_*(X)$ is $\ZZ/2$ graded by the parity of the dimension of the Spin$^c$ manifold $M$.
In other words, $K^{top}_0(X)$ and $K^{top}_1(X)$ consist of equivalence classes of $(M,E,\varphi)$ cycles for which every connected component of $M$ is even or odd dimensional respectively.

\subsection{The clutching map}

Let $X$ be a closed $C^\infty$ manifold, not required to be oriented or even dimensional.
Consider the Spin$^c$ manifold $\Sigma X = S(TX\times \RR)$, the unit sphere bundle of $TX\times \RR$.
$TX$ is an almost complex manifold (see Section \ref{rotation} below).
Therefore  $S(TX\times \RR)$ is a {\em stably} almost complex manifold,
and therefore is a Spin$^c$ manifold with Dirac operator $D_{\Sigma X}$.
Note that the dimension of the manifold $\Sigma X$ is two times the dimension of $X$, and so is even.

$\Sigma X$ is a sphere bundle over $X$. 
$\varphi\;\colon \Sigma X\to X$ is the projection $S(TX\times \RR)\to X$.
Let $B(TX)$ be the unit ball bundle of $TX$ and $S(TX)$ the unit sphere bundle of $TX$.
For $\Sigma X$ there is the ``upper hemisphere'' - ``lower hemisphere'' decomposition
\[ \Sigma X = B(TX)\cup_{S(TX)} B(TX)\]
where the first copy of $B(TX)$ is the upper hemisphere and the second copy of $B(TX)$ is the lower hemisphere.
The Spin$^c$ structure on $\Sigma X$ restricted to the upper hemisphere is the Spin$^c$ structure of $TX$ determined by its almost complex structure.

Let $(\sigma, \pi^*E^0, \pi^*E^1)$ be  a (compactly supported) symbol on $TX$. 
We shall assume that the support of $\sigma$ is contained in the interior of $B(TX)$.
%$(\sigma, E^0, E^1)$ determines an element in $K^0(TX)$.
$E_\sigma$ denotes the vector bundle on $\Sigma X$
\[ E_\sigma=\pi^*E^0\cup_\sigma \pi^*E^1\]
where $\pi^*E^0$ and $\pi^*E^1$ have been restricted to $B(TX)$ which is then identified, respectively, with the upper and lower hemispheres of $\Sigma X$. $\pi^*E^0$ and $\pi^*E^1$ are then clutched together on $S(TX)$ by the vector bundle isomorphism $\sigma$.

Since the interior of the upper hemisphere of $\Sigma X$ identifies with $TX$,
there is a push-forward map $\iota_*\;\colon K^0(TX)\to K^0(\Sigma X)$.
\begin{lemma} \label{compactify}
In $K^0(\Sigma X)$
\[ \iota_*(\sigma, \pi^*E^0, \pi^*E^1) = [E_\sigma]-[\varphi^*E^1]\]
\end{lemma}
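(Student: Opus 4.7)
The plan is to realize $\iota_*(\sigma, \pi^*E^0, \pi^*E^1)$ as a triple on the compact space $\Sigma X$ whose underlying vector bundles are precisely $E_\sigma$ and $\varphi^*E^1$. Since $\Sigma X$ is compact, any triple $(\tau, V, W)$ on $\Sigma X$ represents $[V] - [W] \in K^0(\Sigma X)$, regardless of where $\tau$ is an isomorphism, so the statement reduces to exhibiting one such triple.

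The definition of $\iota_*$ via the open embedding $\iota\colon TX \hookrightarrow \Sigma X$ (as the interior of the upper hemisphere) says: extend the triple $(\sigma, \pi^*E^0, \pi^*E^1)$ to a triple on all of $\Sigma X$ in such a way that the extended bundle map is an isomorphism outside the original compact support $K \subset B(TX)$. To do this, I observe two things. First, by the definition of clutching, $E_\sigma$ restricted to the upper hemisphere is exactly $\pi^*E^0$ and restricted to the lower hemisphere is exactly $\pi^*E^1$. Second, $\varphi^*E^1$ restricted to either hemisphere is $\pi^*E^1$ (since $\varphi$ restricts to $\pi$ on each hemisphere). Therefore $(E_\sigma, \varphi^*E^1)$ is a pair of bundles on $\Sigma X$ whose restriction to the upper hemisphere is $(\pi^*E^0, \pi^*E^1)$.

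Next I define a bundle map $\tilde\sigma \colon E_\sigma \to \varphi^*E^1$ piecewise: on the upper hemisphere let $\tilde\sigma = \sigma$ (mapping $\pi^*E^0$ to $\pi^*E^1$), and on the lower hemisphere let $\tilde\sigma$ be the identity of $\pi^*E^1$. The key consistency check is on the equator $S(TX)$: a section of $E_\sigma$ near the equator can be written as a section $s$ of $\pi^*E^0$ using the upper trivialization, or equivalently as $\sigma(s)$ in $\pi^*E^1$ using the lower trivialization. From the upper side, $\tilde\sigma$ sends $s$ to $\sigma(s) \in \pi^*E^1 = \varphi^*E^1$; from the lower side, $\tilde\sigma$ sends $\sigma(s)$ to itself via the identity, giving the same element of $\varphi^*E^1$. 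Thus $\tilde\sigma$ is well-defined. Moreover $\tilde\sigma$ is an isomorphism on the lower hemisphere (it is the identity there) and on the complement of $K$ in the upper hemisphere (because $\sigma$ is), so $\tilde\sigma$ is an isomorphism outside $K$, which is a compact subset of the interior of the upper hemisphere.

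Consequently the triple $(\tilde\sigma, E_\sigma, \varphi^*E^1)$ represents $\iota_*(\sigma, \pi^*E^0, \pi^*E^1) \in K^0(\Sigma X)$, and since $\Sigma X$ is compact this element equals $[E_\sigma] - [\varphi^*E^1]$. There is no serious obstacle here; the only delicate point is the equatorial consistency of $\tilde\sigma$, which is dictated by the very definition of the clutched bundle $E_\sigma$, so the matching is automatic once the two pieces of $\tilde\sigma$ are chosen as above.
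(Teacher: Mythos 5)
Your proposal is correct and takes essentially the same approach as the paper: both define $\tilde\sigma\colon E_\sigma\to\varphi^*E^1$ to be $\sigma$ on the upper hemisphere and the identity on the lower hemisphere, then invoke compactness of $\Sigma X$ to conclude. You simply spell out the equatorial consistency check and the compact-support verification that the paper leaves implicit.
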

\begin{proof}
Let $\tilde{\sigma}$ be the vector bundle map $E_\sigma \to \varphi^*E^1$
that on the lower hemisphere is the identity map of $\pi^*E^1$, and on the upper hemisphere is $\sigma$.
Hence, (by definition) $\iota_*(\sigma, \pi^*E^0, \pi^*E^1)=(\tilde{\sigma}, E_\sigma,\varphi^*E^1)$.
Since $\Sigma X$ is compact,  $(\tilde{\sigma}, E_\sigma,\varphi^*E^1)=[E_\sigma]-[\varphi^*E^1]$.

\end{proof}

The clutching map
\[ c:K^0(TX)\to K_0^{top}(X)\]
is 
\[ c(\sigma, \pi^*E^0, \pi^*E^1) = (\Sigma X, E_\sigma, \varphi)\]
{\bf Remark.} In geometric $K$-homology, the $K$-cycle $(\Sigma X, \varphi^\ast E^1, \varphi)$ bounds, and therefore is zero in $K_0^{top}(X)$.

\subsection{Analytic $K$-homology (with compact supports)}

We review analytic $K$-homology with {\em compact supports}.
For analytic $K$-homology see \citelist{\cite{HR00} \cite{Bl98}}.
\vskip 6pt

The category of topological spaces for which this theory is defined consists of compactly generated Hausdorff topological spaces for which every compact subset is second countable (i.e., metrizable).
Any $CW$ complex has this property.

For a space $X$ in our category we denote
\[ K^a_j(X) = \lim_{\stackrel{\longrightarrow}{\Delta\subset X}} KK^j(C(\Delta),\CC)\]
%For a pair $(X,Y)$, with $X$ as above and $Y\subseteq X$ a closed subset,  we let
%\[ K^a_j(X, Y) = \lim_{\stackrel{\longrightarrow}{\Delta\subset X}} KK^j(C_0(\Delta\setminus Y),\CC)\]
The direct limit is taken over compact subsets $\Delta\subseteq X$. 
In the category of CW complexes one could take the limit over all finite subcomplexes $\Delta$.
 
We briefly review the definition of the Kasparov $K$-homology groups for second countable compact Hausdorff spaces $\Delta$.
Then $C(\Delta)$ is a unital separable $C^*$-algebra.
An {\em even Fredholm module} $(T, H^0, H^1, \rho_0, \rho_1)$ consists of two Hilbert spaces $H^0, H^1$ each equipped with a $\ast$-representation $\rho_j\colon C(\Delta)\to \mathcal{L}(H^j)$,
and a bounded linear operator $T\colon H^0\to H^1$, such that for all $f\in C_0(X)$
\[ (T^*T-1)\rho_0(f),\quad (TT^*-1)\rho_1(f),\quad T\rho_0(f)-\rho_1(f)T\]
are compact operators.

An {\em odd Fredholm module}  $(T, H, \rho)$ consists of a Hilbert space $H$ equipped with a $\ast$-representations $\rho\colon C(\Delta)\to H$,
and a {\em self-adjoint} bounded linear operator $T\colon H\to H$, such that for all $f\in C_0(X)$
\[ (T^2-1)\rho(f),\quad T\rho(f)-\rho(f)T\]
are compact operators.

The group $KK^0(C(\Delta),\CC)$ consists of equivalence classes of even Fredholm modules,
while $KK^1(C(\Delta),\CC)$ consists of equivalence classes of odd Fredholm modules.
Addition is defined by direct sum of Fredholm modules.
The equivalence relation for Fredholm modules is generated by the direct sum relation and operator homotopy (for details, see \citelist{\cite{Bl98} \cite{HR00}}).

\subsection{The ``Choose an operator'' map}

We review (including excision) the connection between elliptic operators trivial at infinity and compactly supported $K$-homology.

 \vskip 6pt
 
Let $W$ be a (not necessarily compact) manifold without boundary.
 A pseudodifferential operator $P\;\colon C_c^\infty(W, E)\to C_c^\infty(W, F)$ of order zero is {\em trivial at infinity} if there exists open sets $\Omega_1, \Omega_2$ in $W$
and an isomorphism $\psi$ of vector bundles $E|\Omega_2\to F|\Omega_2$,
such that 
\begin{itemize}
\item $\Omega_1$ has compact closure $\bar{\Omega}_1$.
\item $W=\Omega_1\cup \Omega_2$.
\item If a section $s\in C_c^\infty(W,E)$ has support in $\Omega_1$ then the support of $Ps$ is in $\Omega_1$.
\item If a section $s\in C_c^\infty(W,E)$ has support in $\Omega_2$ then  $Ps=\psi s$.
\end{itemize}

 There is a well-defined ``choose an operator'' map
 \[    K^0(TW)\to KK^0_c(C_0(W),\CC)\]
 even if $W$ is not compact.
 Given a compactly supported symbol $(\sigma, \pi^*E, \pi^*F)$ on $TW$,
 i.e., an element in $K^0(TW)$,
 we construct an element in compactly supported $K$-homology $KK^0_c(C_0(W),\CC)$ as follows.
 
First, we may assume that outside a compact set  $\sigma$ is the pull-back via $\pi$ of a vector bundle isomorphism.
Granted this, we can choose a pseudodifferential elliptic operator $P$ of order zero that is trivial at infinity with symbol $\sigma$,
with properties as listed above.

If $\Delta$ is an open set with smooth boundary and compact closure that contains $\Omega_1$,
then due to the properties listed above $P$ maps $C_c^\infty(\Delta, E)$ to $C_c^\infty(\Delta, F)$.
This ``restriction of $P$ to $\Delta$'' extends to a bounded operator
\[ P_\Delta\;\colon\; L^2(\Delta,E)\to L^2(\Delta,F)\]
and therefore an element $[P_\Delta]\in KK^0(C(\Delta),\CC)$.
Note that $C(\Delta)$ acts on the Hilbert spaces $L^2(\Delta,E)$, $L^2(\Delta,F)$ by the evident multiplication operators.

With $\Delta$ and $\sigma$ fixed, a different choice of operator $Q$ will result in operators $P_\Delta$, $Q_\Delta$ that differ by a compact operator.
With $\Delta$ fixed, a homotopy of $\sigma$ can be lifted to a homotopy of operators.
Finally, with $\sigma$ and $P$ fixed, if $\Delta\subset \Delta'$, then $P_{\Delta'}\in KK^0(C(\Delta'),\CC)$ is the direct sum of the push forward of $P_\Delta \in KK^0(C(\Delta),\CC)$ with a trivial element.
In summary, the $K$-theory class of $\sigma$ determines an element in  $KK^0_c(C_0(W),\CC)$.

\vskip 6pt
We will need the following excision lemma.
\begin{lemma}\label{excision}
Let $W$ be a manifold, and let $U$ be an open subset of $W$.
Then there is commutativity in the diagram
\[ \xymatrix{ K^0(TU)\ar[r]\ar[d]  & K^0(TW) \ar[d] \\
 KK^0_c(C_0(U),\CC)\ar[r] & KK^0_c(C_0(W),\CC)              
 }
\]
\end{lemma}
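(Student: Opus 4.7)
The plan is to track a class through both paths of the diagram by constructing a single operator on $W$ that simultaneously represents both routes.

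Start with $[(\sigma, \pi^*E^0, \pi^*E^1)] \in K^0(TU)$. By the standard reduction used in the ``choose an operator'' construction, we may assume there is a compact set $K \subset U$ and a vector bundle isomorphism $\psi \colon E^0|_{U \setminus K} \to E^1|_{U \setminus K}$ such that $\sigma = \pi^*\psi$ over $U \setminus K$. Using $\psi$ to clutch, extend $E^0, E^1$ to bundles $\tilde E^0, \tilde E^1$ on $W$ and extend $\sigma$ to a symbol $\tilde\sigma$ on $TW$ with $\tilde\sigma = \pi^*\psi$ over $W \setminus K$. This extended triple represents the image of the starting class under the top horizontal arrow.

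Next, construct a single order-zero pseudodifferential operator $\tilde P \colon C_c^\infty(W, \tilde E^0) \to C_c^\infty(W, \tilde E^1)$ with symbol $\tilde\sigma$, trivial at infinity with data $(\Omega_1, \Omega_2, \psi)$, where $\Omega_1$ is a small open neighborhood of $K$ with $\bar\Omega_1$ compact in $U$ and $\Omega_2 = W \setminus \bar\Omega_1$. Because $\Omega_1 \subset U$, the trivial-at-infinity property ensures that $\tilde P$ restricts to an operator $P$ on $C_c^\infty(U, E^0)$ which is trivial at infinity with data $(\Omega_1, U \setminus \bar\Omega_1, \psi)$ and symbol $\sigma$. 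Choose an open $\Delta$ with smooth boundary and $\bar\Omega_1 \subset \Delta \Subset U$; then $\bar\Delta$ is compact in both $U$ and $W$, and the bounded operators $P_\Delta$ and $\tilde P_\Delta$ on $L^2(\Delta, E^j)$ are literally identical. They therefore determine a common class $[P_\Delta] \in KK^0(C(\bar\Delta), \CC)$, which maps to the same element of $KK^0_c(C_0(W), \CC)$ whether it first passes through $KK^0_c(C_0(U), \CC)$ (via the left vertical arrow and the horizontal arrow) or goes directly via the inclusion $\bar\Delta \subset W$ (via the right vertical arrow on the extended symbol). Commutativity of the diagram follows.

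The main obstacle is the construction of $\tilde P$ so that it is \emph{literally} equal to multiplication by $\psi$ outside $\Omega_1$. This is a standard partition-of-unity construction: start from any order-zero pseudodifferential operator on $W$ with symbol $\tilde\sigma$, cut it off near $\bar\Omega_1$, and fill in $\psi$ multiplied by a cutoff supported in $\Omega_2$, correcting the resulting symbol by a smoothing operator supported in a neighborhood of $\partial \Omega_1$. Alternatively, one avoids this construction entirely by choosing $P$ on $U$ and $\tilde P$ on $W$ independently with their respective symbols, and invoking the remark in the preceding subsection that two order-zero pseudodifferential operators trivial at infinity with the same symbol differ by a compact operator on $L^2(\Delta, E^j)$; so their images in $KK^0(C(\bar\Delta), \CC)$ coincide and the argument above goes through unchanged.
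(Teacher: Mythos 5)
Your proof is correct and follows essentially the same route as the paper's (very terse) argument: arrange the pushed-forward symbol on $TW$ so that it restricts to the original symbol on $TU$, choose a single operator on $W$ that is trivial at infinity with $\Omega_1 \Subset U$, and observe that its restriction to a suitable $\Delta \Subset U$ is simultaneously a valid choice for the left vertical arrow and for the right vertical arrow after pushing forward. The additional remarks — the cut-and-paste construction of $\tilde P$ and the alternative that invokes the compact-difference property of two choices of operator with the same symbol — are welcome elaborations of details the paper leaves implicit, not a different method.
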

\begin{proof}
We may assume that the push-forward of a symbol $\sigma$ from $TU$ to $TW$ restricts to $\sigma$ on $TU$. 
Therefore,  the operator we choose on $W$ restricts to an operator on $U$ with symbol $\sigma$,
and outside of $U$ this operator will be trivial.
\end{proof}

\subsection{The equivalence of geometric and analytic $K$-homology}

Let $\Delta$ be a finite CW complex.
As was proven in \cite{BHS10}, there is a natural isomorphism
\[ \mu\;\colon\; K^{top}_j(\Delta)\to KK^j(C(\Delta),\CC)\]
%\[ \mu\;\colon\; K^{top}_j(X)\to K^a_j(X)\]
For the compactly supported theory, the isomorphism was introduced in \cite{BD91}, and is constructed as follows.
Let $D_E=E\otimes D$ be the Dirac operator $D$ for the Spin$^c$ manifold $M$, twisted by the vector bundle $E$.
Form the bounded elliptic pseudodifferential operator $T=D_E(1+D_E^*D_E)^{-1/2}$, which defines an element in 
\[ [D_E]\in KK^j(C(M), \CC)\]
Then 
\[ \mu(M,E,\varphi) = \varphi_*([D_E]).\]
where $\varphi_*$ is
\[ \varphi_*\;\colon\; K^a_j(M)\to K^a_j(X)\]
\begin{theorem}
If $X$ is a CW complex, then 
\[ \mu\;\colon\; K^{top}_j(X)\to K^a_j(X)\]
is an isomorphism.
%Moreover, this isomorphism is functorial, and compatible with the boundary maps in the homology sequences for $K^{top}$ and $K^a$.
\end{theorem}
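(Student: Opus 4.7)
The plan is to reduce the statement to the case of a finite CW complex, which is the content of \cite{BHS10}, by expressing both sides as filtered colimits over the finite subcomplexes of $X$ and invoking naturality of $\mu$.

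First, the analytic side is already a direct limit by definition: $K^a_j(X) = \lim_\Delta KK^j(C(\Delta),\CC)$ over compact subsets $\Delta\subseteq X$. Since any compact subset of a CW complex lies in a finite subcomplex, this colimit may equivalently be indexed by the (filtered) poset of finite subcomplexes of $X$.

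Next I would establish the analogous compatibility on the geometric side: the natural map $\lim_\Delta K^{top}_j(\Delta)\to K^{top}_j(X)$ is an isomorphism. For surjectivity, a $K$-cycle $(M,E,\varphi)$ for $X$ has compact image $\varphi(M)$; by cellular approximation $\varphi$ is homotopic to a map $\varphi'$ into a finite subcomplex $\Delta\subseteq X$, and the cylinder $(M\times[0,1],\,E\times[0,1],\,H)$ provides a bordism showing $(M,E,\varphi)\sim(M,E,\varphi')$ in $K^{top}_j(X)$. For injectivity, each of the three generating equivalences (bordism, direct sum-disjoint union, vector bundle modification) uses only compact auxiliary data — a bordism manifold $W$ with its bundle and reference map, or a Spin$^c$ bundle over a compact $M$ — so its reference map has compact image and, again by cellular approximation, may be homotoped into a common finite subcomplex without altering the underlying compact manifold and bundle data.

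Once these two compatibilities are in place, naturality of $\mu$ under inclusions $\Delta\hookrightarrow\Delta'$ of finite subcomplexes, together with the \cite{BHS10} isomorphism $\mu\colon K^{top}_j(\Delta)\to KK^j(C(\Delta),\CC)$ at each finite stage, yields the result: filtered colimits of isomorphisms of abelian groups are isomorphisms, so $\mu\colon K^{top}_j(X)\to K^a_j(X)$ is an isomorphism. The main obstacle is the injectivity half of the geometric compatibility — verifying that a chain of elementary equivalences realizing $(M_0,E_0,\varphi_0)\sim(M_1,E_1,\varphi_1)$ in $K^{top}_j(X)$ can be pushed, step by step, into a single finite subcomplex $\Delta'$ containing the (finite) supports of $\varphi_0,\varphi_1$; this requires checking that cellular approximation applied to the reference map of a bordism $W$ (or of a vector bundle modification) can be performed rel.\ the boundary without destroying the Spin$^c$ and bundle structure, at worst absorbing a further bordism into the chain.
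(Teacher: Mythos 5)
Your proof takes the same route as the paper's: express both $K^{top}_j(X)$ and $K^a_j(X)$ as direct limits over finite subcomplexes $\Delta\subset X$, invoke the isomorphism of \cite{BHS10} at each finite stage, and pass to the colimit. The paper states the colimit identification for the geometric side without proof; your expanded argument for it is correct, and the worry you raise at the end is not a genuine obstacle, since cellular approximation only modifies the reference map $\Phi\colon W\to X$ (through a homotopy, which is itself absorbed as a bordism) and leaves the Spin$^c$ manifold $W$ and the vector bundle data untouched.
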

\begin{proof}
That $\mu$ is an isomorphism for any $CW$ complex $X$ follows 
from the result for finite $CW$ complexes of \cite{BHS10}.
Both groups $K^{top}_j(X)$ and $K^a_j(X)$ are the direct limit over finite subcomplexes $\Delta\subset X$.
\end{proof}

\section{A family of index 1 operators}\label{family}
We review the properties of the family of elliptic operators of index 1 introduced in \cite{AS1}.
See also \cite{Ho71}.
We place it in the context of $KK$-theory.

\subsection{Properties of $\alpha_F$}

Let $\pi\;\colon F\to X$ be a $C^\infty$ $\RR$ vector bundle on the closed $C^\infty$ manifold $X$.
Then (once a connection has been chosen for $F$) $TF$ can be viewed as a $\CC$ vector bundle over the manifold $TX$.
The projection $TF\to TX$ is the derivative of $F\to X$.
The fibers of this vector bundle are of the form $TF_x=F_x\oplus F_x$,
which we view as the complex vector space $F_x\otimes \CC$.

%Given an element $\xi \in K^a_0(M)$, an element $\tilde{\xi}\in K^a_0(F)$ such that $\pi_*(\tilde{\xi}) = \xi$ can be constructed as follows.
Let $\alpha_F$ be a family of operators whose total space is $F$ with base space $X$,
where in each fiber $F_x$ we choose an order zero pseudodifferential elliptic operator whose principal symbol, taken as an element in $K^0(TF_x)$,  is the Bott element for $TF_x=F_x\otimes \CC$.
Since the Bott element of $TF_x$ has compact support, the family $\alpha_F$ consists of operators that are trivial at infinity.

The family $\alpha_F$ has three relevant properties.
\vskip 6pt
 {\bf Property 1.} The principal symbol of the family $\alpha_F$ is the Thom class of $TF$, viewed as a $\CC$ vector bundle on $TX$. 
\vskip 6pt

{\bf Property 2.} 
The index of $\alpha_F$, as a family, is a trivial $\CC$ line bundle on $X$.
In $KK$-theory, this can be expressed as follows.
The family $\alpha_F$ gives an element in the group
\[ KK_c(C_0(F),C(X)) :=\lim_{\stackrel{\longrightarrow}{\Delta\subseteq F}} KK(C(\Delta), C(X))\]
where $\Delta\subseteq F$ ranges over all compact subsets of $F$
and
\[ \pi_*\;\colon\; KK_c(C_0(F),C(X))\to KK(C(X),C(X))\]
maps $\alpha_F$ to the unit of the ring $KK(C(X),C(X))$.

\vskip 6pt
{\bf Property 3.}
It follows from Property 2 that if $\xi$ is any element in $KK(C(X),\CC)$, then the Kasparov product 
\[ \alpha_F\# \xi \in KK_c^0(C_0(F),\CC)\]
is an element in compactly supported $K$-homology of $F$ with
\[\pi_*(\alpha_F\# \xi) = \xi\in  KK^0(C(X),\CC)\]
See \cite{AS1} for details on $\alpha_F$.

\begin{proposition}\label{alpha}
There is commutativity in the diagram
\[ \xymatrix{   K^0(TX) \ar[d]_{\rm Op}\ar[r]^{{\rm Thom}}_{\cong}& K^0(TF)\ar[d]^{\rm Op}\\
 KK(C(X),\CC)  & KK_c^0(C_0(F),\CC) \ar[l]_{\pi_*}              
 }
\]
\end{proposition}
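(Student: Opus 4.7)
The strategy is to factor the computation through the Kasparov product, exploiting Property 3 of the family $\alpha_F$. Let $\sigma \in K^0(TX)$ and set $\xi := \mathrm{Op}(\sigma) \in KK(C(X),\CC)$. Form the Kasparov product
$$\alpha_F \# \xi \;\in\; KK_c^0(C_0(F),\CC).$$
By Property 3 of $\alpha_F$ we have $\pi_*(\alpha_F \# \xi) = \xi = \mathrm{Op}(\sigma)$, which is exactly the left-hand column of the diagram. The proposition therefore reduces to identifying
$$\alpha_F \# \mathrm{Op}(\sigma) \;=\; \mathrm{Op}(\mathrm{Thom}(\sigma)) \quad \text{in } KK_c^0(C_0(F),\CC).$$

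To establish this identification, I would represent both classes by concrete pseudodifferential operators on $F$, trivial at infinity, and verify they carry the same principal symbol in $K^0(TF)$; since the $\mathrm{Op}$ map depends only on the symbol class, this suffices. Choose a pseudodifferential operator $P$ on $X$ with principal symbol $\sigma$, trivial at infinity, and choose a representative of the family $\alpha_F$ whose fiberwise symbol is the Bott element of $TF_x = F_x\otimes\CC$. Using a connection on $F$, lift $P$ to act horizontally on sections over $F$ and combine it with $\alpha_F$ acting fiberwise in the standard graded Kasparov-product fashion; this yields an elliptic operator $A$ on $F$, trivial at infinity, whose $KK$-class is $\alpha_F \# \mathrm{Op}(\sigma)$. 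The principal symbol of $A$ at a covector $\eta \in T_v^*F$, split via the connection into horizontal and vertical components, is the graded product of $\sigma$ on the horizontal part and the fiberwise Bott symbol on the vertical part. Under the isomorphism $TF|_{TX} \cong TF$ used in Property 1, this $K^0(TF)$ class is exactly the product of the Thom class of $TF/TX$ with the pullback of $\sigma$, which by definition of the Thom homomorphism is $\mathrm{Thom}(\sigma)$.

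Combining the two displayed equalities gives $\pi_* \circ \mathrm{Op} \circ \mathrm{Thom} = \mathrm{Op}$, which is the commutativity asserted by the proposition. The main technical point is the symbol calculation for the Kasparov product, i.e.\ verifying that the operator $A$ assembled from $P$ and $\alpha_F$ genuinely represents the class $\alpha_F \# \xi$ and that its principal symbol is the K-theoretic product described above. This is a standard invocation of the Kasparov product formalism for elliptic operators, but it is the one step in the argument where one must be careful: everything else here is either a direct citation of the properties of $\alpha_F$ recalled above or a formal manipulation in $KK$-theory.
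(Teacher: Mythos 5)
Your proposal is correct and follows essentially the same route as the paper: both reduce the claim to the identity $\mathrm{Op}(\mathrm{Thom}(\sigma)) = \alpha_F \,\#\, \mathrm{Op}(\sigma)$ in $KK_c^0(C_0(F),\CC)$ and then apply Property 3 of $\alpha_F$ to conclude. The only difference is that the paper asserts $\mathrm{Op}(\tau\#\sigma) = \alpha_F\#P$ without elaboration, whereas you spell out the symbol computation for the Kasparov product that justifies it.
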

\begin{proof}
Let $\sigma\in K^0(TX)$ be the symbol of the elliptic operator $P$ on $X$, i.e., $P={\rm Op}(\sigma)$.
The Thom isomorphism $K^0(TX)\to K^0(TF)$ maps $\sigma$ to $\tau\# \sigma$, 
where $\tau$ is the Thom class of $TF$ as a complex vector bundle on $TX$.
Then ${\rm Op}(\tau\# \sigma)=\alpha_F\# P$
and by Property 3 above, $\pi_*(\alpha_F\# P)=P$.
\end{proof}

\subsection{H\"ormander's operators}
For clarity of exposition, we give some indication here of H\"ormander's approach \cite{Ho71} to the construction of $\alpha_F$. 

Let $V$ be a finite dimensional $\RR$ vector space with a given Euclidean inner product.
The Clifford algebra $\mathrm{Cliff}(V)$ acts on the complexification of the sum of the exterior powers  $\Lambda^*_\CC V$.
For $v\in V$, $c(v)$ denotes Clifford multiplication by $v$
\[ c(v)\;\colon\; \Lambda^{even}_\CC V \to \Lambda^{odd}_\CC V\] 
Consider the elliptic operator 
\[ \alpha=c(v)+D, \qquad D=d+d^*\]
which acts on $\CC$ valued differential forms on $V$, and maps even to odd forms.
The {\em total} symbol of $\alpha$ is $c(v+i\xi)$, which is the Bott element $\beta\in K^0(TV)$.
The operator $\alpha$ and its total symbol $\beta$ are $O(V)$ equivariant,
\[ \alpha\in KK^0_{O(V)}(C_0(V),\CC)\qquad \beta\in KK^0_{O(V)}(\CC, C_0(TV))\]
Note that here (exceptionally for this paper) $KK^0_{O(V)}(C_0(V), \CC)$ is {\em not} compactly supported.

If $F$ is an $\RR$ vector bundle on $M$, as above,
choose a Euclidean structure for $F$, thus reducing its structure group to $O(n)$.
Then due to the $O(n)$ equivariance of $\alpha$, a family of operators in the fibers of $F$ is determined,
\[ \alpha'_F \in KK^0(C_0(F), C(M))\]
To obtain the element
\[ \alpha_F \in KK^0_c(C_0(F), C(M))\]
in compactly supported $KK$-theory, realized as a family of elliptic operators trivial at infinity, as in \cite{AS1},
a perturbation must be made on the operator $\alpha$.
For details of this perturbation, see \cite{Ho71}. 

\begin{remark}
The total symbol $\beta$ of the elliptic operator $\alpha$ on $V$ is its {\em principal} symbol
when $\alpha$ is viewed as an element in the Weyl calculus.
Since $\beta$ is invertible outside the point $(0,0)$ in $TV$, 
it follows, using the Weyl calculus, that the operator $\alpha$ is Fredholm.
Moreover, by the Weyl calculus index theorem,
the index of $\alpha$ is the Bott number of its Weyl symbol $\beta$, which is 1. 
In fact, as shown in \cite{Ho71}, the null space of $\alpha$ is 1 dimensional, 
and is spanned by the Gaussian $e^{-|v|^2/2}$ (a section of the line bundle $V\times \Lambda^0_\CC V$ on $V$).  
Hence, the families index of $\alpha_F'$ is a trivial line bundle on $M$.

For details on the Weyl calculus, see \cite{Ho79}.
\end{remark}

\section{The Thom class of  $T(TM)$}\label{rotation}

Let $X$ be a closed $C^\infty$ manifold.
$T(TX)$ is a complex vector bundle over $TX$ in two different ways.
Therefore there are two Thom classes $\tau_0, \tau_1$.
One is the symbol of the Dirac operator $D_{TX}$ of $TX$,
while the other is the symbol of the family $\alpha_{TX}$.

A Riemannian metric for the closed $C^\infty$ manifold $X$ makes $TX$ into an almost complex manifold as follows.
Choose normal coordinates  $x=(x_1, \dots, x_n)$ on $X$,
and let $\xi=(\xi_1,\dots, \xi_n)$ be the induced linear coordinates on the fibers of $TX$.
Then at point $x=0$ we let
\[ J\frac{\partial}{\partial x_j} = -\frac{\partial}{\partial \xi_j}\qquad J\frac{\partial}{\partial \xi_j} = \frac{\partial}{\partial x_j}\] 
Let $\pi_0$ be the projection $\pi_0\;\colon T(TX)\to TX$,
obtained by viewing $T(TX)$ as the tangent bundle of the manifold $TX$
and using the standard projection of a vector to the point from which it emanates. 
Hence, in this way $T(TX)$ is a complex vector bundle on $TX$. 
Let $\tau_0$ be its Thom class.
Note that $\tau_0$ is the symbol of the Dirac operator $D_{TX}$ of the almost complex manifold $TX$,
\footnote{Let $W$ be a Spin$^c$ manifold, i.e., $TW\to W$ is a Spin$^c$ vector bundle, and therefore has a Thom class $\tau$.
The symbol of the Dirac operator of $W$ is this Thom class $\tau$.}
\[ \tau_0=\sigma(D_{TX})\]

With notation as in \ref{alpha} above, let $F=TX$.
Then $TF=T(TX)$ is a complex vector bundle over $TX$,
where this time the projection $\pi_1\;\colon T(TX)\to TX$ is the derivative of $TX\to X$.
This is {\em not the same} as the projection $\pi_0\;\colon T(TX)\to TX$ above.
Let $\tau_1$ be the Thom class of this complex vector bundle.
The Thom class $\tau_1$ is the symbol of the family $\alpha_{TX}$.

At first glance, it appears that there are two Thom isomorphisms
\[ K^0(TX)\stackrel{\cong}{\longrightarrow} K^0(T(TX))\;\colon\;
 \sigma\mapsto \pi_j^*\sigma \otimes \tau_j,\quad j=0,1\]
where $\tau_0, \tau_1$ are the two Thom classes introduced above.
However, these two Thom isomorphisms are equal:
\begin{lemma}\label{Thom}
Let  $[\sigma]$ be any element of $K^0(TX)$. Then in $K^0(T(TX))$
\[ [\pi_0^*\sigma \otimes \tau_0] = [\pi_1^*\sigma \otimes \tau_1]\]
\end{lemma}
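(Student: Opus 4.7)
The plan is to exhibit an explicit complex vector bundle isomorphism $\phi$ between the two complex structures on $T(TX)\to TX$, and then to show that $\phi$, regarded as a self-diffeomorphism of $T(TX)$, is properly homotopic to the identity.

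First, I would choose a linear connection on $TX\to X$ so that each tangent space $T_v(TX)=H_v\oplus V_v$ splits into horizontal and vertical summands, both canonically identified with $T_{\pi(v)}X$. The formulas of the paper then read $J_0(h,v)=(v,-h)$ on the $\pi_0$-fibers (horizontal $\oplus$ vertical coordinates) and $J_1(v',w')=(-w',v')$ on the $\pi_1$-fibers (base point $\oplus$ vertical tangent coordinates). Define $\phi\colon T(TX)\to T(TX)$ by sending $u\in T_v(TX)$ with horizontal/vertical decomposition $(u_h,u_v)$ to the tangent vector $\tilde u\in T_{-u_h}(TX)$ with horizontal part $v$ and vertical part $u_v$; in local coordinates $(x,\xi,\dot x,\dot\xi)$ this reads $\phi(x,\xi,\dot x,\dot\xi)=(x,-\dot x,\xi,\dot\xi)$. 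A direct computation gives $\pi_1\circ\phi=\pi_0$ and $J_1\phi=\phi J_0$, so $\phi$ is a $\CC$ vector bundle isomorphism covering the identity on $TX$. Naturality of the Thom class yields $\phi^*\tau_1=\tau_0$, and together with $\pi_0=\pi_1\circ\phi$ this gives
\[
\pi_0^*\sigma\otimes\tau_0 \;=\; \phi^{*}(\pi_1^*\sigma\otimes\tau_1).
\]

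Next, observe that $\phi$ is fiberwise linear with respect to the \emph{other} bundle structure $T(TX)\to X$ (not $T(TX)\to TX$): on each fiber $\RR^{3n}$ over $x\in X$, $\phi$ acts by the block matrix $\begin{pmatrix}0 & -I_n \\ I_n & 0\end{pmatrix}$ on the $(\xi,\dot x)$-subspace and by the identity on the $\dot\xi$-subspace, which is a rotation by $\pi/2$ in the $(\xi,\dot x)$-plane and lies in $SO(3n)$. Since $SO(3n)$ is path connected, picking a continuous path from the identity to this matrix and applying it fiberwise produces a proper isotopy from $\mathrm{id}_{T(TX)}$ to $\phi$. Because the $K$-theory used in this paper is $K$-theory with compact supports, it is invariant under proper homotopy, so $\phi^*$ acts as the identity on $K^0(T(TX))$. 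Combining with the previous display gives $\pi_0^*\sigma\otimes\tau_0=\pi_1^*\sigma\otimes\tau_1$, as required.

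The main obstacle is the verification of complex linearity in the first step: the negative sign in the formula for $\phi$ is forced by the sign conventions built into $J_0$ and $J_1$, and one must track these carefully. Once the correct $\phi$ is in hand, the remaining ingredients (naturality of the Thom class, path connectedness of $SO(3n)$, and proper homotopy invariance of compactly supported $K$-theory) are standard bookkeeping.
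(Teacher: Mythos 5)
Your proof is correct and takes essentially the same approach as the paper: introduce the coordinates $(w_0,w_h,w_v)$ on $T(TX)$ via a connection, exhibit a rotation that intertwines the two complex bundle structures over $TX$, and then observe that this rotation is properly homotopic to the identity (fiberwise over $X$) so that its action on compactly supported $K$-theory is trivial. Your $\phi$ is the inverse of the paper's $\rho$, and you phrase the key step as ``$\phi$ is a $\CC$-linear bundle isomorphism, so naturality gives $\phi^*\tau_1=\tau_0$,'' whereas the paper computes $\tau_1=i\rho^*\tau_0$ directly and discards the harmless unit scalar; both amount to the same thing.
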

\begin{proof}
Let $\pi_X\colon TX\to X$ be the usual projection,
and let $\pi_{TX}\colon T(TX)\to TX$ be the projection for the tangent bundle of $TX$.
There is a second map from $T(TX)$ to $TX$,
i.e., the derivative of $\pi_X\colon TX\to X$.
Denote $\pi_0=\pi_{TX}$ and $\pi_1=d\pi_X$.

We can view $T(TX)$ as a fiber bundle on $X$ via the projection $\pi_X\circ \pi_0\;\colon T(TX)\to X$.
We denote $W:=T(TX)$ if we view it as a fiber bundle on $X$.
The fiber $W_x$ of $W$ at a point $x\in X$ is isomorphic to a direct sum of three copies of $T_xX$,
\[ W_x\cong T_xX\oplus T_xX\oplus T_xX\]
A point $w\in W_x$ will be denoted as a triple $(w_0, w_h, w_v)$.
First, $w$ is a tangent vector to $TX$.
It emanates from a point $w_0:=\pi_0(w)\in T_xX$.
%$w_0$ is itself a tangent vector to $M$ at point $x\in M$.
The pair $(w_h, w_v)$ denotes the vertical and horizontal components of the tangent vector $w$ to the manifold $TX$ at $w_0$.

More precisely, the derivative of $\pi_X\;\colon TX\to X$ is a map $d\pi_X\colon T(TX)\to TX$,
whose kernel consists of the ``vertical'' tangent vectors $T^{vert}TX$ to $TX$.
There is the canonical identification $T_{w_0}^{vert}(TX)\cong T_xX$,
so we have a short exact sequence
\[ 0\to T_xX\to T_{w_0}(TX)\stackrel{d\pi_{M}}{\longrightarrow} T_xX\to 0\]
We let $w_h:= d\pi_X(w)$.
Choosing a splitting of this short exact sequence we have
\[ T_{w_0}(TM)\cong T_xX\oplus T_xX\]
and we let $w_v=w-w_h$.

With this notation, the projection $\pi_{TX}\;\colon T(TX)\to TX$ is the map
\[ \pi_0(w_0, w_h, w_v):= w_0\]
while the derivative $d\pi_X\;\colon T(TX)\to TX$ is the map
\[ \pi_1(w_0, w_h, w_v) = w_h\]

We recall the standard construction of the Thom class for a complex vector bundle $\pi\;\colon F\to B$.  
On $F$, consider the vector bundles $\pi^*\Lambda^{even}F$ and $\pi^*\Lambda^{odd}F$,
the even and odd exterior powers of $F$.
The Thom class is given by the vector bundle map
\[ \tau\;\colon\; \pi^*\Lambda^{even}F\to \pi^*\Lambda^{odd}F\]
which at $v\in F_b$ is
\[ \tau(v):= \wedge_v + \iota_v\]
where $\iota_v$ is the adjoint of $\wedge_v\;\colon \alpha\mapsto v\wedge \alpha$ for the choice of some hermitian structure on $F$.

The vector bundles on $T(TX)$ used to construct the two Thom classes $\tau_0, \tau_1$
are the same, i.e., the pull-back via $\pi_X\circ \pi_0=\pi_X\circ \pi_1$ of the even and odd parts of the exterior algebra $\Lambda TX\otimes \CC$.
At a point $w=(w_0, w_h, w_v)$ in $T(TX)$,  the Thom classes $\tau_0, \tau_1$ are given by
\[ \tau_0(w)=\tau(w_v+iw_h)\qquad w_v+iw_h \in T_xX\otimes \CC\]
\[ \tau_1(w)=\tau(w_0+iw_v) \qquad w_0+iw_v \in T_xX\otimes \CC\]

Let $\rho$ be the diffeomorphism
\[ \rho\;\colon\; T(TX)\to T(TX)\;\colon\; \rho(w_0, w_h, w_v) = (w_h, -w_0, w_v)\]
$\rho$ is properly homotopic to the identity via maps $\rho_t$ given by
\[ \rho_t(w_0, w_h, w_v):= (w_0\,\cos{\pi t/2}  + w_h \sin{\pi t/ 2}, -w_0\,\sin{\pi t/2}  + w_h \cos{\pi t/ 2}, w_v)\]
i.e., the map 
\[ T(TX)\times [0,1]\to T(TX)\;\colon\; (w,t)\mapsto \rho_t(w)\]
is proper.

Note that $\tau_1(w)= i\tau_0(\rho(w))$, i.e., $\tau_1=i\rho^*\tau_0$. Also $\pi_1=\pi_0\circ \rho$. 
Therefore 
\[\pi_1^*\sigma \otimes \tau_1 = i\rho^*(\pi^*_0\sigma\otimes \tau_0)\]

\end{proof}

\begin{remark}
Consider the commutative diagram of Proposition \ref{alpha} in the special case when $F=TX$,
\[ \xymatrix{   K^0(TX) \ar[d]_{\rm Op}\ar[r]^{{\rm Thom}}_{\cong}& K^0(T(TX))\ar[d]^{\rm Op}\\
 KK(C(X),\CC)  & KK_c^0(C_0(TX),\CC) \ar[l]_{\pi_*}              
 }
\]
\end{remark}
Since the two Thom isomorphisms are equal, the commutativity of this diagram can be expressed as
\[ [P] = \pi_*[\sigma_P\otimes D_{TX}]\qquad {\rm in}\; KK^0(C(X),\CC)\] 
However, $\sigma_P\otimes D_{TX}$ has to be interpreted as an elliptic operator of order zero on $TX$ that is trivial at infinity, and whose principal symbol is homotopic to $\sigma_P\otimes \sigma(D_{TX})$.
Note that $\sigma_P\otimes \sigma(D_{TX})$ is not in any sense an operator of Dirac type.

In the next section, by passing to an appropriate compactification of $TX$,
we will replace the somewhat non-explicit operator $\sigma_P\otimes D_{TX}$ by an actual twisted Dirac operator.

\section{Proof of commutativity}\label{reduction}

In this section we give our direct proof of reduction to Dirac. 
The proof is direct, and does not use any form of the Atiyah-Singer index theorem.

Let $P$ be an elliptic (pseudo)differential operator on the closed $C^\infty$-manifold $X$,
\[ P\;\colon\; C^\infty(X,E)\to C^\infty(X,F)\]
$E$ and $F$ are complex $C^\infty$ vector bundles on $X$.
$P$ determines an element 
\[[P]=(T, H^0, H^1, \rho_0, \rho_1)\in KK^0(C(X),\CC)\]
where
\begin{itemize}
\item $T=P(1+P^*P)^{-1/2}$
\item $H^0=L^2(X,E)$ and $H^1=L^2(X,F)$
\item $\rho_j\;\colon\; C(X)\to \mathcal{L}(H^j)$ is $f\mapsto \mathcal{M}_f$ 
where $\mathcal{M}_f$ is the multiplication operator $(\mathcal{M}_f u)(p)=f(p)u(p)$.
\end{itemize}

The Dirac operator $D_{\Sigma X}$ of the Spin$^c$ manifold $\Sigma X$, twisted by the vector bundle $E_\sigma$, determines an element $[E_\sigma\otimes D_{\Sigma X}]\in KK^0(C(\Sigma X), \CC)$,
which pushes forward to 
\[ \varphi_*[E_\sigma\otimes D_{\Sigma X}]  \in KK^0(C(X),\CC)\]

\begin{theorem}
In $KK^0(C(X),\CC)$
\[ [P] = \varphi_*[E_\sigma\otimes D_{\Sigma X}] \]
i.e., there is commutativity in the diagram
\[ \xymatrix{   & K^0(TX) \ar[dr]^{\rm Op}\ar[dl]_c& \\
 K_0^{top}(X) \ar[rr]_\mu & & KK^0(C(X),\CC)              
 }
\]
\end{theorem}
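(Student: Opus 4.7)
The plan is to reduce $[P]$ to the twisted Dirac $[E_\sigma\otimes D_{\Sigma X}]$ in three steps, combining Proposition \ref{alpha} applied with $F=TX$, Lemma \ref{Thom}, the excision Lemma \ref{excision}, Lemma \ref{compactify}, and a bounding argument. Step 1: by Proposition \ref{alpha} with $F=TX$ we obtain $[P]=\pi_*[Q]$ in $KK^0(C(X),\CC)$, where $\pi\colon TX\to X$ is the projection and $Q\in KK^0_c(C_0(TX),\CC)$ is an elliptic operator on $TX$, trivial at infinity, with principal symbol $\tau_1\#\sigma_P\in K^0(T(TX))$. By Lemma \ref{Thom} this symbol equals $\tau_0\#\sigma_P$, where $\tau_0=\sigma(D_{TX})$ is the Dirac symbol of the almost complex manifold $TX$, so $Q$ may be chosen to be a twisted Dirac-type operator on $TX$.

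Step 2: I pass to the compactification $\Sigma X$ of $TX$. Identify $TX$ with the interior of the upper hemisphere of $\Sigma X$ via the open inclusion $\iota\colon TX\hookrightarrow \Sigma X$. By construction, the Spin$^c$ structure on the upper hemisphere of $\Sigma X$ agrees with the almost complex Spin$^c$ structure of $TX$, so $D_{\Sigma X}$ restricts on the upper hemisphere to $D_{TX}$; moreover $E_\sigma$ restricts to $\pi^*E$ on the upper hemisphere and to $\pi^*F=\varphi^*F$ on the lower hemisphere. Combining Lemma \ref{compactify} (which identifies $\iota_*\sigma_P=[E_\sigma]-[\varphi^*F]$ in $K^0(\Sigma X)$) with the excision Lemma \ref{excision} applied to $U=\iota(TX)\subset W=\Sigma X$, one obtains
\[ \iota_*[Q] = [E_\sigma\otimes D_{\Sigma X}]-[\varphi^*F\otimes D_{\Sigma X}] \quad\text{in } KK^0(C(\Sigma X),\CC). \]
Intuitively, the two twisted Dirac operators on the right coincide on the lower hemisphere (both being $\pi^*F\otimes D_{\Sigma X}$) and differ on the upper hemisphere by an operator whose symbol, via Lemma \ref{compactify}, is exactly the symbol of $Q$ pushed forward under $\iota_*$.

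Step 3: apply $\varphi_*$. Since $\varphi\circ\iota=\pi$, the left-hand side becomes $\pi_*[Q]=[P]$ by Step 1. The second term on the right becomes $\mu(\Sigma X,\varphi^*F,\varphi)$, and this $K$-cycle bounds: the compact Spin$^c$ ball bundle $(B(TX\oplus\RR),\pi_B^*F,\pi_B)$, where $\pi_B\colon B(TX\oplus\RR)\to X$ is the bundle projection, provides an explicit null bordism (its boundary is exactly $(\Sigma X,\varphi^*F,\varphi)$). Thus this term vanishes in $K^{top}_0(X)$, and hence in $KK^0(C(X),\CC)$ under the isomorphism $\mu$. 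We conclude $[P]=\varphi_*[E_\sigma\otimes D_{\Sigma X}]$.

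The main obstacle is Step 2, where the compactification/excision argument must be carried out at the operator level rather than merely the symbol level. Granted Lemma \ref{compactify}, the heart of the matter is to verify that choosing an operator commutes with the Thom-twist construction, so that the push-forward $\iota_*[Q]$ really is represented on $\Sigma X$ by the difference of the two twisted Diracs. The verification reduces to the fact that any two order-zero pseudodifferential operators on the closed manifold $\Sigma X$ with homotopic principal symbols define the same class in $KK^0(C(\Sigma X),\CC)$, together with the naturality of the ``choose an operator'' map with respect to open inclusions.
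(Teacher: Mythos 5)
Your proposal is correct and follows essentially the same route as the paper's proof: it applies Proposition \ref{alpha} with $F=TX$, rotates $\tau_1$ to $\tau_0$ via Lemma \ref{Thom}, passes to $\Sigma X$ by combining Lemma \ref{compactify} with the excision Lemma \ref{excision}, and kills the correction term by observing that $(\Sigma X,\varphi^*F,\varphi)$ bounds the ball bundle. The paper merely packages the same three steps into a single three-column commutative diagram rather than sequential reductions, so the two arguments coincide in substance.
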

\begin{proof}
Consider the diagram
\[ \xymatrix{   K^0(TX) \ar[d]_{\rm Op}\ar[r]^{{\rm Thom}}_{\cong}& K^0(T(TX))\ar[d]^{\rm Op}\ar[r]^{\iota_*}  & K^0(T(\Sigma X))\ar[d]^{\rm Op}\\
 KK(C(X),\CC)  & KK_c^0(C_0(TX),\CC) \ar[l]_{\pi_*}^{\cong}\ar [r]^{\iota_*}  & KK(C(\Sigma X),\CC)  \ar@/^2pc/[ll]_{\varphi_*}          
 }
\]
The diagram commutes by Proposition \ref{alpha} (for the square on the left), Lemma \ref{excision} (for the square on the right), and the fact that $\pi=\varphi\circ \iota$ (for the bottom row).

Starting in the upper left corner of the diagram, let $(\sigma, \pi^*E, \pi^*F)$ be the symbol of the elliptic operator $P$.
Applying the Thom isomorphism, we obtain $\tau_1\otimes \pi_1^*\sigma$ (where $\tau_1$ is the symbol of the family $\alpha_{TX}$).
By Proposition \ref{Thom} this is equal to  $\tau_0\otimes \pi_0^*\sigma$ in $K^0(T(TX))$,
where now $\tau_0$ is the symbol of the Dirac operator $D_{TX}$ of the almost complex manifold $TX$.

Since $D_{TX}$ is the restriction of the Dirac operator of $\Sigma X$ to the upper hemisphere,
Lemma \ref{compactify} implies that
\[ \iota_*(\tau_0\otimes \pi_0^*\sigma) = \sigma(D_{\Sigma X}) \otimes (E_\sigma - \varphi^* F)\]
as elements in $K^0(T(\Sigma X))$.

Commutativity of the diagram now implies that in $KK(C(X),\CC)$
\[ [P]=\varphi_*(D_{\Sigma X} \otimes E_\sigma) - \varphi_*(D_{\Sigma X}) \otimes  F\]
Finally, $\varphi_*(D_{\Sigma X})=0$ in $KK(C(X),\CC)$ because 
$\Sigma X$ is the boundary of  the ball bundle $B(TX\times \RR)$ as a Spin$^c$ manifold.

\end{proof}

\bibliographystyle{amsplain}

\bibliography{MyBibfile}

\end{document}